\newcommand{\R}{\mathbb{R}}
\newcommand{\C}{\mathbb{C}}
\newcommand{\Z}{\mathbb{Z}}
\newcommand{\N}{\mathbb{N}}
\renewcommand{\phi}{\varphi}
\renewcommand{\d}{\, d}									               
\newcommand{\tn}[1]{\textnormal{#1}}					                   
\renewcommand{\l}{\left}                                               
\renewcommand{\r}{\right}								               
\renewcommand{\b}[1]{\l( #1 \r)}						               
\newcommand{\st}{\,\colon\,}							              	   
\newcommand{\eq}[1]{\begin{equation} #1 \end{equation}}
\newcommand{\aeq}[1]{\begin{align}\begin{split} #1 \end{split}\end{align}}
\theoremstyle{plain}
\newtheorem{thm}[equation]{Theorem}
\newtheorem{lem}[equation]{Lemma}
\newtheorem{prop}[equation]{Proposition}
\newtheorem{cor}[equation]{Corollary}
\theoremstyle{definition}
\newtheorem{defn}[equation]{Definition}
\newtheorem{exa}[equation]{Example}
\newtheorem{opr}[equation]{Open Problem}
\title{Surfaces with Commuting Boundary Laplacian and Dirichlet-to-Neumann Map}
\author{Romain Speciel}
\date{\today}
\begin{document}
\maketitle


\abstract{
For $M\subset \R^{d\geq 3}$ a smooth, connected, compact $d$-dimensional submanifold with boundary, equipped with the standard metric, the Laplacian on $\partial M$ is known to commute with the corresponding Dirichlet-to-Neumann map if and only if $M$ is a ball. In this paper, we investigate the $d=2$ case and show that, surprisingly, there exists a one-parameter family of submanifolds of $\R^2$ as above for which the boundary Laplacian and the Dirichlet-to-Neumann map commute, thus answering an open problem posed by Girouard, Karpukhin, Levitin, and Polterovich. We then classify all such Riemannian surfaces of genus $0$ or whose boundary has $k\geq 3$ connected components.
}

\section{Introduction}
\label{sec: intro}

Let $(M,g)$ be a smooth, connected, oriented, compact Riemannian manifold of dimension $d\geq 2$ with nonempty boundary. To study the positive Laplacian $\Delta_M$ on $M$, we must impose boundary conditions in order to obtain a well-posed problem. This is most commonly done via either \textit{Dirichlet} or \textit{Neumann} boundary conditions, where one prescribes either the boundary value or the boundary normal derivative of functions on the interior, respectively. The \textit{Dirichlet-to-Neumann map}, which we denote by $\Lambda\colon C^\infty(\partial M)\to C^\infty(\partial M)$, is defined to correspond between these two classical boundary conditions by assigning to a function $f\in C^\infty(\partial M)$ the function $\Lambda f\coloneqq \partial_n u |_{\partial M}$, where $n$ is the outward-pointing unit normal at the boundary and $u\in C^\infty(M)$ solves
\eq{
	\Delta_M u =0\quad\tn{and}\quad u|_{\partial M}=f.
}
In other words, the elliptic boundary value problems
\eq{
	\begin{cases}
		\Delta_M u =0\\
		u|_{\partial M}=f
	\end{cases}
	\quad \tn{and}\quad 
	\begin{cases}
		\Delta_M v =0\\
		\partial_n v |_{\partial M}=\Lambda f
	\end{cases}
}
have the same solution. The Dirichlet-to-Neumann map is a self-adjoint pseudodifferential operator of order 1  which is central to many problems in inverse geometry, and has applications across a broad range of fields including physics, medical imaging and geology (we refer the reader to \cite{U14} for an overview). 

\begin{exa}
	Let $M\subset \R^{d\geq 2}$ be the unit ball equipped with the standard metric, with boundary $\partial M =\mathbb{S}^{d-1}$. Define $P_k$ to be the space of homogeneous harmonic polynomials in $\R^d$ of degree $k$, and recall that, in spherical coordinates $(r,\omega)$, the Laplacian is given by
	\eq{
	\label{eq: laplacian in polar coord}
		\Delta_{\R^d}=-\b{\partial_r^2+\frac{d-1}{r}\partial_r-\frac{1}{r^2}\Delta_{\mathbb{S}^{d-1}}}
	}
	so for $p(r,\omega)\in P_k$,
	\eq{
		0=\b{k(k-1)+k(d-1)-\Delta_{\mathbb{S}^{d-1}}}p(1,\omega).
	}
	Therefore, the space $H_k=\{p(1,\omega)\st p\in P_k\}$ consists of eigenfunctions of $\Delta_{\mathbb{S}^{d-1}}$ corresponding to the eigenvalues $k(k+d-2)$. Furthermore, $\Lambda$ acts on $H_k$ simply as multiplication by $k$ (since it corresponds to $\partial_r|_{r=1} p(r,w)$), and one can show using the Stone-Weierstrass theorem that $L^2(\mathbb{S}^{d-1})=\bigoplus_kH_k$. Conclude
	\eq{
	\label{eq: spherical laplacian in terms of DtoN}
		\Delta_{\mathbb{S}^{d-1}}=\Lambda^2+(d-2)\Lambda.
	}
	Consult \cite{CGGS23} for further examples as well as an excellent survey of the subject.
\end{exa}

In general, the Dirichlet-to-Neumann map depends intimately on the geometry of $M$, yet the relationship displayed in Equation (\ref{eq: spherical laplacian in terms of DtoN}) always holds \textit{to leading order}. This is rigorously expressed by the fact that, for any $M$, we have
\eq{
	\sigma_2(\Delta_{\partial M})=\sigma_2(\Lambda^2)=\sigma_2(\Lambda^2+(d-2)\Lambda),
}
where $\sigma_k$ denotes the symbol of order $k$ (see Chapter 12C in \cite{T96} for details). It is natural to then ask the following more refined question: when is the boundary Laplacian given exactly by a function of the Dirichlet-to-Neumann map?

To answer this, we instead consider a more general problem by observing that if the boundary Laplacian is a function of the Dirichlet-to-Neumann map, then we must have $[\Delta_{\partial M},\Lambda]=0$ (though the converse implication is not necessarily true), and thus ask when this commutativity property holds. This problem is stated and investigated in \cite{GKLP22}, where the case of Euclidian submanifolds in dimensions $d\geq 3$ is addressed, but the two dimensional case is not covered by the presented methods and is thus posed as an open problem by the authors.

In this paper, we address the $d=2$ case in broad generality. Our main result consists of a  classification of all surfaces $M$ as above, of genus $0$ or with $k\geq 3$ boundary connected components, for which the boundary Laplacian and the Dirichlet-to-Neumann map commute. To state the theorem, we first define the following equivalence, discussed in \cite{CGGS23}:

\begin{defn}
	We say $(M_1,g_1)$ and $(M_2,g_2)$ are \textit{conformal} to each other if there exists a function $\phi\in C^\infty (M_1)$ such that $(M_1,e^{2\phi}g_1)$ and $(M_2,g_2)$ are isometric to each other, and \textit{$\sigma$-isometric} if $\phi$ can be taken to vanish on $\partial M_1$.
\end{defn} 

This equivalence is natural to consider due to the conformal covariance of the Laplacian in dimension two. We suppress the metric when it is understood. Now, a quick argument shows that if the boundary Laplacian and the Dirichlet-to-Neumann map commute for a surface $M_1$, and $M_2$ is $\sigma$-isometric to $M_1$, then $M_2$ enjoys this same commutator property (see Section \ref{sec: simply connected} for details). Our main theorem is then:

\begin{thm}
\label{thm: main theorem}
	Let $M$ be a smooth, connected, oriented, compact Riemannian surface with nonempty boundary. Suppose further that $M$ is either of genus $0$ or that $\partial M$ has $k\geq 3$ connected components. Then, $[\Delta_{\partial M},\Lambda]=0$ if and only if $M$ is $\sigma$-isometric to a disc, a logarithmic oval or a flat cylinder.
\end{thm}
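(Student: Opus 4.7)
The key input is the conformal invariance of the DtN map in two dimensions: if $\tilde g = e^{2\phi}g$, a short computation shows $\tilde\Lambda = e^{-\phi|_{\partial M}}\Lambda$ (harmonic functions agree while the unit normal scales by $e^{-\phi}$), so $\Lambda$ is a $\sigma$-isometry invariant. The strategy is to use uniformization to put each topological type in a canonical planar model, and then analyse $[\Delta_{\partial M},\Lambda]$ in Fourier modes along $\partial M$.

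\textbf{Simply connected case.} I would take $M = (D, e^{2\phi}g_{\text{std}})$ so that, with $\rho = e^{\phi|_{S^1}}$, one has $\Lambda = \rho^{-1}|D_\theta|$ and $\Delta_{\partial M} = -\partial_s^2$ in the arc length $s$; commutativity means that $\Lambda$ preserves each $\Delta_{\partial M}$-eigenspace $\mathrm{span}(e^{\pm 2\pi i k s/L})$. Linearising in $\phi$ and matching Fourier coefficients in $\theta$ forces every Fourier mode of $\log\rho$ other than the constant and first harmonic to vanish, yielding $\log\rho(\theta) = a + c\cos(\theta-\theta_0)$; but this $\rho$ equals $|f'(e^{i\theta})|$ for $f(z) = \alpha e^{\mu z}+\beta$ with $\mu = ce^{-i\theta_0}$. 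Hence $M$ is (up to $\sigma$-isometry) the exponential image $f(D)$ --- the logarithmic oval, which collapses to the disc when $c = 0$. I would verify sufficiency directly on the exponential image using its rotational symmetry.

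\textbf{Multiply connected cases.} If $\partial M$ has components of lengths $L_1,\ldots,L_k$, the spectrum of $\Delta_{\partial M}$ is $\bigcup_j\{(2\pi n/L_j)^2\}$. Since $M$ is connected, the Poisson kernel couples every pair of boundary components, so the off-diagonal blocks of $\Lambda$ between components are non-zero; preservation of each $\Delta_{\partial M}$-eigenspace therefore forces the eigenvalues $(2\pi n/L_j)^2$ at each $n\geq 1$ to coincide across $j$, i.e.\ $L_1=\cdots=L_k=L$. For $k=2$, the resulting $\sigma$-isometry class is exactly that of a flat cylinder $[0,\ell]\times S^1_L$, on which a separation-of-variables computation makes $\Lambda$ a Fourier multiplier on each circle and so automatically commutes with $\Delta_{\partial M}$. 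For $k\geq 3$, equal boundary lengths remain necessary but no flat model exists; I would extract further obstructions from the off-diagonal DtN blocks --- which are determined by the Green's function of $M$ and cannot intertwine with $\Delta_{\partial M}$ in the way required for any non-degenerate conformal moduli --- to conclude non-existence.

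\textbf{Main obstacle.} The $k\geq 3$ case is the hardest, as it lacks a canonical ``flat'' model and demands a careful study of how the off-diagonal DtN blocks depend on the conformal moduli in order to rule out every such $M$. The simply connected and $k=2$ cases, by contrast, reduce to rigidity statements that close at first order in the conformal factor.
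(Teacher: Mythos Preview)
Your simply connected case contains a substantive error. The commutator $[\Delta_{\partial M},\Lambda_\phi]$ is not linear in $\phi$, so ``linearising in $\phi$'' does not recover the full necessary condition, and your conclusion that $\log\rho=a+c\cos(\theta-\theta_0)$ is simply false. The paper's exact Fourier computation shows the commutator vanishes if and only if $c_{e^{-2\phi}}(k)=0$ for $|k|\geq 3$ --- a constraint on $e^{-2\phi}$, not on $\phi=\log\rho$. For the actual logarithmic oval $\psi(z)=\log(z-a)$ one has $e^{-2\phi}=|e^{i\theta}-a|^2=1-2a\cos\theta+a^2$, a trigonometric polynomial, while $\phi=-\log|e^{i\theta}-a|$ carries every Fourier mode. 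Conversely, your exponential images $f(z)=\alpha e^{\mu z}+\beta$ give $e^{-2\phi}=|\alpha\mu|^{-2}e^{-2\Re(\mu e^{i\theta})}$, which has infinitely many nonzero modes and hence does \emph{not} satisfy the commutator condition (nor do these domains have rotational symmetry). The paper instead passes from the finite-support condition on $e^{-2\phi}$ to a polynomial factorisation $e^{-2\phi}=|p|^2$ via the Fej\'er--Riesz theorem, and then integrates $1/p$ to produce the conformal map onto the disc or logarithmic oval.

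Your $k=2$ step also has a gap: equality of boundary lengths does \emph{not} determine the $\sigma$-isometry class. On the cylindrical model $C_H$ there are many conformal factors with $\int e^{\phi(0,\theta)}\,d\theta=\int e^{\phi(H,\theta)}\,d\theta$ but $\phi|_{\partial C_H}$ non-constant; the paper needs a second Fourier computation to force $c_{e^{-2\phi}}(k)=0$ for all $|k|\geq 1$, i.e.\ $\phi$ locally constant on each boundary circle. For $k\geq 3$ you have only a vague plan, and the paper's actual argument is of an entirely different nature from the analytic obstruction you envisage: it studies the nodal set of the harmonic extension of $\sin m\theta$ (supported on a single boundary circle), builds a graph from the nodal arcs, and combines an Euler-characteristic bound with the maximum principle to obtain $2-2g+m(k-2)\leq k$ for every $m\in\N$, a contradiction when $k\geq 3$.
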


It follows from our result that, surprisingly, there exists a one parameter family of planar submanifolds, which we call \textit{logarithmic ovals}, for which the boundary Laplacian and the Dirichlet-to-Neumann map commute. These correspond (up to translation, rotation and scaling) to images of the closed unit disc $\overline{\mathbb{D}}\subset \C\cong \R^2$ under maps of the form $\psi(z)=\log(z-a)$, where $a>1$ and the logarithm is taken with branch cut along the positive real axis. Note that as $a\to \infty$, this family limits to the disc, after appropriate rescaling. In fact, the theorem indicates that these are the \textit{only} planar submanifolds with this commutativity property. We state this as a corollary.

\begin{cor}
	Let $M\subset \R^2$ be a smooth, connected, compact submanifold with boundary, equipped with the standard metric, such that $[\Delta_{\partial M},\Lambda]=0$. Then, $M$ is a disc or a logarithmic oval.
\end{cor}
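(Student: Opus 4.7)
The plan is to deduce the corollary directly from Theorem \ref{thm: main theorem}. First I would note that any smooth, connected, compact two-dimensional submanifold $M\subset\R^2$ with boundary is planar and therefore of genus $0$, so that the genus $0$ hypothesis of Theorem \ref{thm: main theorem} is automatic. The commutativity assumption then forces $M$ to be $\sigma$-isometric to a disc, a logarithmic oval, or a flat cylinder, and the task reduces to showing that the first two possibilities yield the claimed planar realizations and that the third does not occur.

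Next I would upgrade the $\sigma$-isometry to a genuine isometry, exploiting the fact that both $M$ (with its standard metric) and each of the three model surfaces are flat. Writing the $\sigma$-isometric metric as $e^{2\phi}g_M$ with $\phi|_{\partial M}=0$, the conformal transformation law for Gaussian curvature in dimension two, combined with $K_{g_M}=K_{e^{2\phi}g_M}=0$, forces $\Delta_M\phi=0$. Thus $\phi$ is harmonic on $M$ with vanishing boundary values, hence identically zero by uniqueness for the Dirichlet problem; so $M$ is genuinely isometric to a disc, a logarithmic oval, or a flat cylinder.

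It then remains to rule out the cylindrical case. For this I would exploit the fact that a flat cylinder $\mathbb{S}^1_R\times[0,h]$ of positive height contains a closed geodesic lying in its interior --- for instance, the core circle $\mathbb{S}^1_R\times\{h/2\}$ --- of length $2\pi R>0$. If $M$ were isometric to such a cylinder, this closed geodesic would correspond to a closed geodesic in the interior of $M$, which, in the induced Euclidean metric, must be locally a straight line in $\R^2$; a closed straight line segment however has length zero, a contradiction. The remaining two cases embed isometrically in $(\R^2,g_{\tn{std}})$ in the expected way, uniquely up to rigid motion, yielding precisely the claimed dichotomy.

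The only substantive step in this plan is the exclusion of the flat cylinder, which ultimately reduces to the well-known fact that $\R^2$ admits no closed geodesics; the remaining reductions are essentially topological and conformal bookkeeping on top of Theorem \ref{thm: main theorem}.
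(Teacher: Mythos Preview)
Your proposal is correct. The paper states the corollary without a separate proof, treating it as an immediate consequence of Theorem~\ref{thm: main theorem}; your argument supplies exactly the missing details. The two substantive steps you add are both sound: (i) since $M$ and each of the three model surfaces are flat, the conformal factor in a $\sigma$-isometry is harmonic with vanishing boundary data, hence identically zero, upgrading $\sigma$-isometry to genuine isometry; and (ii) the flat cylinder is excluded because it carries interior closed geodesics while a planar Euclidean domain does not. An equally quick alternative for (ii) is to note that the boundary circles of a flat cylinder are geodesics, whereas a smooth closed curve in $\R^2$ is never a straight line; either way the conclusion follows.
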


To prove our theorem, we first address the simply connected case in Section \ref{sec: simply connected}, where our main insight is to notice that the commutativity assumption in fact implies that the reciprocal of the conformal factor restricted to the boundary has finite Fourier support, from which the result follows. Curiously, this approach is slightly similar to Hurwitz's proof of the isoperimetric inequality as presented in \cite{SS05}, in that a geometric condition is deduced from a bound on the Fourier support of a related function. Next, in Section \ref{sec: higher connectivity}, we extend these methods to the doubly connected case, and treat the case of higher connectivity by an analysis of the nodal set of harmonic extensions of eigenfunctions of the boundary Laplacian.

Finally, we also obtain a generalization of a result from \cite{GKLP22} by Girouard, Karpukhin, Levitin and Polterovich:
\begin{thm}\tn{(cf. with Theorem 1.3 in \cite{GKLP22})}
\label{thm: GKLP improvement}
	Let $M\subset \R^{d}$ be a smooth, connected, oriented, compact manifold of dimension $d\geq 3$, equipped with the standard metric. Then, $[\Delta_{\partial M}, \Lambda]=0$ if and only if $M$ is a ball.
\end{thm}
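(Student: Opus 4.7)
If $M$ is a ball, Equation (\ref{eq: spherical laplacian in terms of DtoN}) expresses $\Delta_{\partial M}$ as a polynomial in $\Lambda$ and the two operators commute. For the converse my plan is threefold: first, deduce from the symbol-theoretic argument of \cite{GKLP22} (which makes no use of connectedness of $\partial M$) that each connected component of $\partial M$ is a round sphere; second, use a spectral argument combined with unique continuation to show that all these spheres have the same radius; third, use an elementary Euclidean observation to conclude that $\partial M$ has exactly one component, whence $M$ is the ball it bounds.

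\textbf{Step 1: spherical components.} The subprincipal symbol analysis of \cite{GKLP22} shows that vanishing of the relevant symbol of $[\Delta_{\partial M}, \Lambda]$ makes the trace-free part of the second fundamental form of each connected component $\Sigma_j$ of $\partial M \subset \R^d$ vanish pointwise, so $\Sigma_j$ is umbilic. Since $d \geq 3$, this forces $\Sigma_j$ to be a round sphere of some radius $r_j > 0$.

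\textbf{Step 2: equal radii.} Suppose for contradiction that $\partial M$ has two distinct components $\Sigma_1$ and $\Sigma_2$ of radii $r_1$ and $r_2$. Fix $k \geq 1$ and a nonzero degree-$k$ spherical harmonic $Y_k$ on $\Sigma_1$, and let $f$ be the function on $\partial M$ equal to $Y_k$ on $\Sigma_1$ and zero on the remaining components, with harmonic extension $u$ to $M$. Then $f$ is a $\Delta_{\partial M}$-eigenfunction with eigenvalue $\lambda_k = k(k+d-2)/r_1^2$, and $\Lambda f$ is too by commutativity. Hence $(\Lambda f)|_{\Sigma_2}$ lies in the $\lambda_k$-eigenspace of $\Delta_{\Sigma_2}$, which is trivial unless some nonnegative integer $m_k$ satisfies $m_k(m_k + d - 2)/r_2^2 = \lambda_k$. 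If no such $m_k$ exists, then $\partial_n u|_{\Sigma_2} = (\Lambda f)|_{\Sigma_2} = 0$, which together with $u|_{\Sigma_2} = 0$ forces $u \equiv 0$ on $M$ by Holmgren's uniqueness theorem, contradicting $u|_{\Sigma_1} = Y_k$. Setting $C = (r_2/r_1)^2$, the existence of $m_k$ for every $k \geq 1$ reduces to the Pell-type equation $(2m_k + d - 2)^2 - C(2k + d - 2)^2 = (1-C)(d-2)^2$ admitting an integer solution $m_k$ for every positive integer $k$; for $C \neq 1$ this has only finitely many solutions, so we must have $C = 1$ and $r_1 = r_2$. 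Applying the argument pairwise shows all radii coincide.

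\textbf{Step 3 and main obstacle.} Finally, $\R^d$ admits no connected compact submanifold bounded by two or more disjoint round spheres of a common radius $r$: an outermost such sphere must enclose $M$ in the ball it bounds, and any other same-radius boundary sphere contained in that ball must coincide with it, contradicting disjointness. Hence $\partial M$ has a single round-sphere component, and by Jordan-Brouwer $M$ is the ball it bounds. I expect the Diophantine analysis of Step 2 to be the main technical obstacle -- although it reduces cleanly to the Pell-type equation above, dealing uniformly with all $d \geq 3$ and all pairs of components requires care. Step 1 is imported from \cite{GKLP22} and Step 3 is essentially immediate.
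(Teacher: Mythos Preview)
Your three-step outline matches the paper's proof almost exactly: the paper also imports from \cite{GKLP22} that each boundary component is a round sphere, then shows all radii agree, then observes that disjoint equal-radius spheres cannot bound a connected compact domain in $\R^d$. The substantive difference is in Step~2, and there your argument has a genuine gap.

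You assert that for $C\neq 1$ the equation $(2m+d-2)^2 - C(2k+d-2)^2 = (1-C)(d-2)^2$ has only finitely many integer solutions. This is false: for $d=3$ and $C=2$ it becomes $a^2 - 2b^2 = -1$, the negative Pell equation, with infinitely many solutions $(a,b)=(1,1),(7,5),(41,29),\dots$. What you actually need is the weaker statement that the equation does not admit a solution for \emph{every} $k\geq 1$; that is true (in the example above, $k=1$ gives $b=3$ and $a^2=17$, not a square), but it is not what you wrote and it requires a different justification than ``finitely many solutions.''

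The paper sidesteps the Diophantine detour entirely. Its Proposition~\ref{prop: boundary isospectrality} uses precisely your unique-continuation mechanism to prove the cleaner statement that all boundary components are Laplace-\emph{isospectral} (same eigenvalues with the same multiplicities). For round spheres the first nonzero eigenvalue is $(d-1)/r^2$, so isospectrality immediately forces equal radii---no number theory needed. Your version is also easily repaired in one line: run the argument symmetrically, so that $(d-1)/r_1^2$ lies in the spectrum of $\Sigma_2$ and $(d-1)/r_2^2$ lies in the spectrum of $\Sigma_1$; since $(d-1)/r^2$ is the \emph{smallest} nonzero eigenvalue of a radius-$r$ sphere, both inequalities $r_1\leq r_2$ and $r_2\leq r_1$ follow.

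A minor point on Step~1: according to the paper, \cite{GKLP22} deduces from the symbol computation that the mean curvature is constant and then invokes Alexandrov's theorem, rather than obtaining umbilicity directly. Either route gives that each component is a round sphere, but you should verify which one the reference actually establishes.
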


This result is proved in \cite{GKLP22}, under the additional assumption that the boundary is connected, by computing the symbol of the commutator in geometric terms to deduce that the mean curvature of $\partial M$ must be constant, and then applying Alexandrov's theorem about embedded constant mean curvature surfaces. (Note, however, that this proof does not cover the $d=2$ case since in the computations appears a factor of $(d-2)$, which vanishes in dimension two.) We show that the boundary connectedness assumption may in fact be removed as a consequence of Proposition \ref{prop: boundary isospectrality}.

Ou approach does not cover the case when $M$ has two or fewer boundary components and nonzero genus. We state this as an open problem:

\begin{opr}
	Let $M=(\mathbb{T}_2\#\dots\# \mathbb{T}_2)\setminus D$, where $D$ is either a disc or the disjoint union of two discs. Does there exist a metric on $M$ such that $[\Delta_{\partial M}, \Lambda]=0$?
\end{opr}

\vspace{3em}

\noindent \textbf{Acknowledgements:} The author thanks Iosif Polterovich and David Sher for suggesting this problem and engaging in stimulating conversations, Josef Greilhuber for his significant contributions to the ideas of Section \ref{sec: higher connectivity}, and Rafe Mazzeo for his expert and generous advising throughout. This work was in part supported by an NSERC-PGSD grant.

\section{Simply-Connected Surfaces}
\label{sec: simply connected}

We begin by recalling how the concerned geometric objects change under conformal variations of the metric. To this end, take $(M,g)$ as before and $\phi\in C^{\infty}(M)$, and compare the metrics $g_0=e^{2\cdot 0}g$ and $g_\phi=e^{2\phi}g$ on $M$. With subscripts corresponding to the conformal factor, we have
	\eq{
		\Delta_{(M,g_\phi)}=e^{-2\phi}\Delta_{(M,g_0)}, \quad \nabla_\phi  =e^{-2\phi}\nabla_0\quad \tn{and}\quad n_\phi=e^{-\phi}n_0.
	}
	It follows that harmonic functions are preserved under conformal changes of the metric and that
	\eq{
		\partial_{n_{\phi}}u=g_\phi(\nabla_\phi u,n_\phi)=e^{-\phi}g_0(\nabla_0u,n_0)=e^{-\phi}\partial_{n_0}u,
	}
	so $\Lambda_\phi=e^{-\phi}\Lambda_0$. Note this property is unique to dimension two; in higher dimensions, the Dirichlet-to-Neumann map has a more complicated variation under conformal perturbations of the interior since the Laplacian is no longer conformally covariant.
	
	Next, let $\partial_{t_\phi}$ denote the derivative along an oriented unit-speed parametrization of the boundary with respect to the metric $g_\phi$. A quick computation shows
	\eq{
		\Delta_{(\partial M,g_\phi)}=-(\partial_{t_\phi})^2=-(e^{-\phi}\partial_{t_0})^2= -e^{-2\phi}(\partial_{t_0}^2-\phi'\partial_{t_0}),
	}
	from which we conclude
	\eq{
	\label{eq: conf commutator}
		[\Delta_{(\partial M,g_\phi)},\Lambda_\phi]=[-e^{-2\phi}(\partial_{t_0}^2-\phi'\partial_{t_0}),e^{-\phi}\Lambda_0].
	}
	In particular, this commutator is unchanged across pairs of surfaces which are $\sigma$-isometric, as mentioned in Section \ref{sec: intro}.

	We shall now assume $(M,g)$ to be simply connected. By the smooth Riemann mapping theorem, $M$ is conformal to the closed unit disc with the standard metric. Take therefore $\phi\in C^{\infty}(\overline{\mathbb{D}})$ so that $(\overline{\mathbb{D}},g_\phi=e^{2\phi}g_0)$ and $(M,g)$ are isometric, where $g_0$ is the standard Euclidian metric. 
	Our main insight is to realize that Equation \ref{eq: conf commutator} implies finite Fourier support of the reciprocal of the conformal factor restricted to the boundary. To this end, define for $f$ a function on $\mathbb{S}^1$ (or, by abuse of notation, on $\overline{\mathbb{D}}$ then restricted to $\mathbb{S}^1$) its Fourier coefficients $c_f(k)$ by
	\eq{
		c_f(k)=\frac{1}{2\pi}\int e^{-ik\theta}f(e^{i\theta})\d \theta.
	}

	\begin{prop}
	\label{prop: commutator means vanishing coefs}
		The commutator $[\Delta_{(\mathbb{S}^1,g_\phi)},\Lambda_\phi]$ vanishes if and only if $c_{e^{-2\phi}}(k)=0$ for $\abs{k}\geq 3$.
	\end{prop}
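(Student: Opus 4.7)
The plan is to recast the commutator in a form where the conformal factor $v \coloneqq e^{-2\phi}$ appears in a self-contained operator, then extract the Fourier condition directly. Setting $u = e^{-\phi}$ and $\partial = \partial_\theta$, the formulas preceding (\ref{eq: conf commutator}) rearrange to $\Delta_{(\mathbb{S}^1,g_\phi)} = -u\partial u\partial$, while $\Lambda_\phi = u\Lambda_0 = u H\partial$, where $H$ is the Hilbert transform on $\mathbb{S}^1$ (the Fourier multiplier $-i\,\tn{sgn}(k)$), using the standard identity $\Lambda_0 = H\partial$ on the unit disc. Pulling the leftmost $u$ and the rightmost $\partial$ out of both products produces the key factorization
\eqs{
	[\Delta_{(\mathbb{S}^1,g_\phi)},\Lambda_\phi] \;=\; u\,[H,W]\,\partial, \qquad W \coloneqq \partial u\partial u.
}
Since multiplication by $u>0$ is invertible and $\partial$ has range equal to the mean-zero subspace, vanishing of this operator is equivalent to the matrix entries $[H,W]_{k,l}$ being zero for every $k$ and every $l\neq 0$.

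The critical observation is that $W$, although defined via $u$, depends only on $v$: the Leibniz rule together with $v' = 2uu'$ and $v'' = 2(u')^2 + 2uu''$ gives
\eqs{
	W \;=\; v\partial^2 \;+\; \tfrac{3}{2}\,v'\partial \;+\; \tfrac{1}{2}\,v''.
}
Applying $W$ to $e^{il\theta}$ and extracting the Fourier coefficient at $e^{ik\theta}$, the identity $2l^2 + 3l(k-l) + (k-l)^2 = k(k+l)$ collapses the bracket to
\eqs{
	W_{k,l} \;=\; -\tfrac{1}{2}\,k(k+l)\,c_v(k-l).
}
Since $H$ is diagonal in Fourier, the commutator entries are $[H,W]_{k,l} = -i\bigl(\tn{sgn}(k) - \tn{sgn}(l)\bigr) W_{k,l}$, so the vanishing requirement becomes $k(k+l)\bigl(\tn{sgn}(k) - \tn{sgn}(l)\bigr)\,c_v(k-l) = 0$ for every $l \neq 0$ and every $k$.

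A brief case analysis finishes the equivalence. The prefactor vanishes automatically whenever $k = 0$, $k+l = 0$, or $\tn{sgn}(k) = \tn{sgn}(l)$; the remaining case requires $kl < 0$ with $k \neq 0$ and $k+l \neq 0$. In that regime, $|k-l| = |k| + |l|$ with $|k| \neq |l|$, hence $|k-l| \geq 3$, and conversely every integer of absolute value at least $3$ is realized as such a $k-l$. The vanishing condition is therefore equivalent to $c_v(m) = 0$ for all $|m| \geq 3$. The main obstacle here is conceptual rather than computational: a direct Fourier expansion of the commutator yields a cubic identity in $c_u = c_{\sqrt{v}}$ whose translation into a condition on $c_v$ is opaque. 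The factorization above sidesteps the square root because $W = \partial u\partial u$ happens to collapse to a differential operator in $v$ and its derivatives alone, and spotting this collapse is what makes the proof go through.
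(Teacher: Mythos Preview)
Your proof is correct and takes a genuinely different route from the paper's. The paper proceeds by brute force: it applies the commutator to $e^{in\theta}$, pairs the result with $e^{i(k-n)\theta}$, and then uses the integration-by-parts identities
\eqs{
\int e^{ik\theta}e^{-2\phi}\phi'\,d\theta=\frac{ik}{2}\int e^{ik\theta}e^{-2\phi}\,d\theta,\qquad
\int e^{ik\theta}e^{-2\phi}\bigl(2(\phi')^2-\phi''\bigr)\,d\theta=-\frac{k^2}{2}\int e^{ik\theta}e^{-2\phi}\,d\theta
}
to collapse everything to a scalar multiple of $c_{e^{-2\phi}}(k)$, followed by a sign case analysis on $n$ and $k-n$. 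Your factorization $[\Delta_\phi,\Lambda_\phi]=u[H,W]\partial$ with $W=\partial u\partial u$ is doing the same work more transparently: the Leibniz-rule identity $W=v\partial^2+\tfrac{3}{2}v'\partial+\tfrac{1}{2}v''$ is precisely the operator-level statement of those two integration-by-parts formulas, and it explains \emph{why} $v=e^{-2\phi}$ (rather than $e^{-\phi}$) is the natural variable. Your sign analysis on $(k,l)$ via $\tn{sgn}(k)-\tn{sgn}(l)$ matches the paper's case split on whether $0\le n\le k$, etc., since the paper's $\abs{n}$ and $\abs{k-n}$ are exactly the eigenvalues of $\Lambda_0=H\partial$. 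What your approach buys is a clean structural reason for the appearance of $e^{-2\phi}$ and a computation that never has to unpack $\phi'$ or $\phi''$ individually; what the paper's approach buys is that it requires no auxiliary operator ($H$, $W$) and is entirely self-contained from the commutator formula~(\ref{eq: conf commutator}).
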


\begin{proof}	
	Let $\theta$ denote the standard unit speed parametrization of $\mathbb{S}^1$, and observe from Equation (\ref{eq: laplacian in polar coord}) that $u(r,\theta)=r^{\,\abs{n}}e^{in\theta}$ is the harmonic extension of $e^{in\theta}$ to the flat disc, so
	\eq{
		\Lambda_0e^{in\theta}=\partial_r|_{r=1}\b{ r^{\,\abs{n}}e^{in\theta}}=\abs{n}e^{in\theta}.
	}
	Now, from Equation (\ref{eq: conf commutator}), the commutator vanishes if and only if for every $n\in \Z$ we have
	\eq{
	e^\phi[e^{-2\phi}(\partial_\theta^2-\phi'\partial_\theta),e^{-\phi}\Lambda_0]e^{in\theta}=0,
	}
	which in turn holds if and only if the left hand side integrates against $e^{i(k-n)\theta}$ to zero for every $k\in \Z$. Combining these observations and expanding derivatives using the fact that the Dirichlet-to-Neumann map is self adjoint, we see that $[\Delta_{(\mathbb{S}^1,g_\phi)},\Lambda_\phi]=0$ if and only if
	\eq{
	\label{eq: expanded commutator}
		0=\int e^{ik\theta} e^{-2\phi}\b{\abs{n}(2(\phi')^2-\phi''-3in\phi'-n^2)-\abs{k-n}(-n^2-in\phi')}\d \theta
	}
	for every $n, k\in \Z$. Integrate by parts to obtain
	\eq{
		\int e^{ik\theta}e^{-2\phi}\phi'\d \theta=-\frac{1}{2}\int e^{ik\theta}\b{\partial_\theta e^{-2\phi}}\d \theta=\frac{ik}{2}\int e^{ik\theta} e^{-2\phi}\d \theta
	}
	and
	\eq{
		\int e^{ik\theta}e^{-2\phi}(2(\phi')^2-\phi'')\d \theta=\frac{1}{2}\int e^{ik\theta}\b{\partial_\theta^2 e^{-2\phi}}\d \theta=\frac{-k^2}{2}\int e^{ik\theta} e^{-2\phi}\d \theta,
	}
	and deduce (\ref{eq: expanded commutator}) is equivalent to
	\eq{
		0=\b{\abs{n}(-k^2/2+3kn/2-n^2)-\abs{k-n}(-n^2+nk/2)}\int e^{ik\theta}e^{-2\phi}\d \theta.
	}
	When $n\geq 0$ and $k\leq n$, or when $n\leq 0$ and $k\geq n$, this statement is vacuous since the leading coefficient is immediately zero. On the other hand, when $0\leq n\leq k$ or $k\leq n\leq 0$, we obtain
	\eq{
		0=n(k-n)(k-2n)\int e^{ik\theta}e^{-2\phi}\d \theta.
	}
	Conclude (\ref{eq: expanded commutator}) holds for every $n,k$ precisely when
	\eq{
		\int e^{ik\theta}e^{-2\phi}\d \theta=0
	}
	for $\abs{k}\geq 3$, as desired.
\end{proof}

Next, we seek to show that if $c_{e^{-2\phi}}(k)=0$ for $\abs{k}\geq 3$, then $(\overline{\mathbb{D}},g_\phi)$ is $\sigma$-isometric to either a disc or a logarithmic oval. We shall employ the following classical lemma, which explains how this condition of finite Fourier support may be interpreted. The proof is provided for completeness.	
	
\begin{lem}[Fej\'er-Riesz Theorem, 1916]
\label{lem: FR Them}
	Let $f\colon \mathbb{S}^1\to \C$ be smooth, nonvanishing, and suppose $c_{\,\abs{f}}(k)=0$ for $\abs{k}\geq n+1$. Then, there exists a polynomial $p$, nonvanishing on $\overline{\mathbb{D}}$ and of degree at most $n$, such that $\abs{f(e^{i\theta})}=\abs{p(e^{i\theta})^2}$.
\end{lem}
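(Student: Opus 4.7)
The plan is to reduce the claim to the classical Fej\'er–Riesz factorization of strictly positive trigonometric polynomials, with the strict positivity (equivalently, the nonvanishing of $f$) doing the work of excluding zeros on $\mathbb{S}^1$. First I would set $P(\theta) := |f(e^{i\theta})|$ and observe that by hypothesis $P$ is a real trigonometric polynomial of degree at most $n$, strictly positive on $\mathbb{S}^1$. Writing $P(\theta) = \sum_{k=-n}^n c_k e^{ik\theta}$ with the reality condition $c_{-k}=\bar{c_k}$, I would then lift it to the polynomial
\eqs{
Q(z) = z^n\sum_{k=-n}^n c_k z^k = \sum_{j=0}^{2n} c_{j-n}\, z^j,
}
of degree at most $2n$, which satisfies $Q(e^{i\theta}) = e^{in\theta}P(\theta)$, and hence $|Q(e^{i\theta})|=P(\theta)$ on $\mathbb{S}^1$.

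Next I would exploit the reciprocal-conjugate symmetry $Q(z) = z^{2n}\,\overline{Q(1/\bar z)}$, which follows directly from $c_{-k}=\bar{c_k}$. This forces the zero set of $Q$ to be invariant under the involution $\alpha \mapsto 1/\bar\alpha$. Since $P>0$ everywhere on $\mathbb{S}^1$, $Q$ has no zeros on the unit circle, so its $2n$ zeros partition into $n$ pairs $\{\alpha_j, 1/\bar{\alpha_j}\}$ with $|\alpha_j|<1<|1/\bar{\alpha_j}|$. I would then keep only the roots outside the disc and set
\eqs{
p(z) = \lambda \prod_{j=1}^n \b{z - 1/\bar{\alpha_j}}
}
for a constant $\lambda$ to be chosen. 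By construction, $p$ has degree $n$ and is nonvanishing on $\overline{\mathbb{D}}$.

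Finally I would match moduli using the elementary identity $|e^{i\theta}-1/\bar\alpha|^2 = |e^{i\theta}-\alpha|^2/|\alpha|^2$ valid on $\mathbb{S}^1$ (immediate from $\overline{e^{i\theta}}=e^{-i\theta}$). Combined with the factorization $Q(z) = c_n\prod_j (z-\alpha_j)(z-1/\bar{\alpha_j})$, this yields
\eqs{
|p(e^{i\theta})|^2 = \frac{|\lambda|^2}{|c_n|\prod_j|\alpha_j|}\,P(\theta),
}
so that choosing $|\lambda|^2 = |c_n|\prod_j|\alpha_j|$ (with any unimodular phase) gives $|p(e^{i\theta})|^2 = P(\theta) = |f(e^{i\theta})|$, as desired. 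The step most requiring care will be the root-pairing argument: without the strict positivity of $P$, the polynomial $Q$ could have zeros on $\mathbb{S}^1$, which would then necessarily occur with even multiplicity and need to be split evenly between the interior and exterior factors. The hypothesis that $f$ is nonvanishing is precisely what sidesteps this complication, and is the only nontrivial ingredient beyond the symmetry of $Q$.
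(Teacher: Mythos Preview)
Your argument is correct and is essentially the same as the paper's: both lift $|f|$ to the Laurent polynomial $w(z)=\sum_{-n}^n c_k z^k$, multiply by $z^n$, use the reciprocal-conjugate symmetry to pair the roots across $\mathbb{S}^1$, and keep the roots outside $\overline{\mathbb{D}}$ to build $p$. The only cosmetic differences are that the paper assumes without loss of generality that $c_{\pm n}\neq 0$ (so that there are exactly $n$ root-pairs, a point you should state explicitly), and it fixes the constant more directly by writing $w(z)=c\prod(z-\alpha_k)(z^{-1}-\bar\alpha_k)$, evaluating at $z=1$ to get $c>0$, and then observing $p\bar p=w$ on $\mathbb{S}^1$ without needing your modulus identity.
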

\begin{proof}
	Begin by writing $\abs{f(e^{i\theta})}=\sum_{k=-n}^n a_ke^{ik\theta}$. Since $\abs{f}$ is real valued, we must have $\overline {a_k}=a_{-k}$, so the function $w(z)=\sum_{k=-n}^n a_k z^k$ satisfies $w(z)=\overline {w(1/\overline{z})}$. We suppose without loss of generality that $a_{\pm n}\neq 0$ and set $q(z)=z^nw(z)$, which is then a polynomial of degree exactly $2n$ with $q(0)\neq 0$.  Furthermore, the roots of $q$ do not lie on the unit circle since $\abs{f}>0$, and hence come in distinct pairs $\alpha_k$ and $1/\overline{\alpha_k}$ with $\abs{\alpha_k}>1$. We can therefore rewrite
	\aeq{
		w(z)&=c\prod_{k=1}^n(z-\alpha_k)(z^{-1}-\overline{\alpha_k})
	}
	for some constant $c$. By setting $z=1$ above, we observe the product is positive and $w(1)=|f(1)|>0$, hence $c>0$. Define now $p(z)=\sqrt{c}\cdot \prod_{k=1}^n(z-\alpha_k)$ and conclude
	\eq{
		\abs{p(e^{i\theta})^2}=p(e^{i\theta})\cdot \overline{p(e^{i\theta})}=w(e^{i\theta})=\abs{f(e^{i\theta})},
	}
	as desired.
\end{proof}	
	
We are now aptly armed to attack our theorem.

\begin{proof}[Proof of Theorem \ref{thm: main theorem} in the simply connected case]
	As discussed, it suffices by the Riemann mapping theorem to show the result for $(\overline{\mathbb{D}},g_\phi=e^{2\phi}g_0)$, where $g_0$ is the standard metric. Suppose then that  $[\Delta_{(\mathbb{S}^1,g_\phi)},\Lambda_\phi]=0$, and apply Propositions 2.6 and Lemma 2.15 to produce a polynomial $p$ of degree at most $2$ which is nonvanishing on the unit disc and has $e^{-2\phi}=\abs{p^2}$ on $\mathbb{S}^1$. Define $\psi\colon \overline{\mathbb{D}}\to \C$ by
	\eq{
		\psi(z)=\int_0^z\frac{1}{p(w)}\d w,
	}
	and denote the image of $\psi$ by $\Omega$. We now consider three cases:
	
	\noindent Case 1: $p(z)=q(z)^2$ with $q$ of degree at most 1. After integrating, we deduce $\psi$ is a M\"obius transformation, hence $\Omega$ is necessarily a disc.
	
	\noindent Case 2: $p(z)=z-a$. After integrating, we deduce that, up to a constant scale and shift, $\psi(z)=\log (z-a)$ (and note that $\abs{a}>1$ since the roots of $p$ lie outside $\overline{\mathbb{D}}$, so the logarithm is well defined). Therefore, the image $\Omega$ corresponds uniquely (up to translations, rotations, and scaling) to the domains obtained by taking $a\in \R_{>1}$ and taking the branch of $\log$ along the positive real axis. The image of the disc under $\psi$ is a \textit{logarithmic oval}, which is not a disc, yet which has commuting Dirichlet-to-Neumann map and boundary Laplacian. Examples of such domains are displayed in Figure 1. Note that, as $a\to \infty$, the resulting domain tends to a disc (after appropriate translation and rescaling).

\begin{figure}[h!]
\label{fig: log ovals}
    \centering
    \begin{minipage}{0.45\textwidth}
        \centering
        \includegraphics[width=0.9\textwidth]{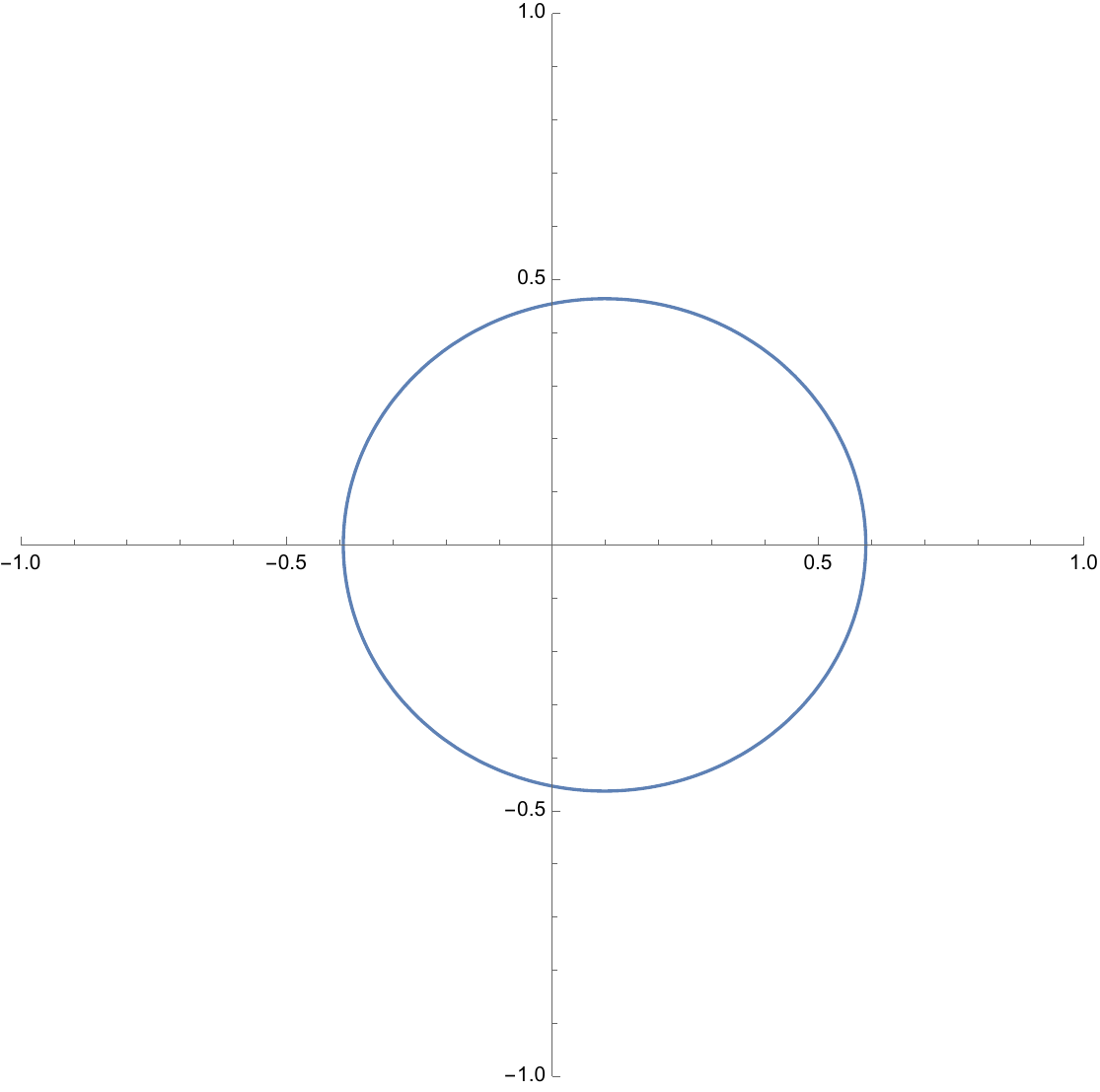} 
    \end{minipage}
    \begin{minipage}{0.45\textwidth}
        \centering
        \includegraphics[width=0.9\textwidth]{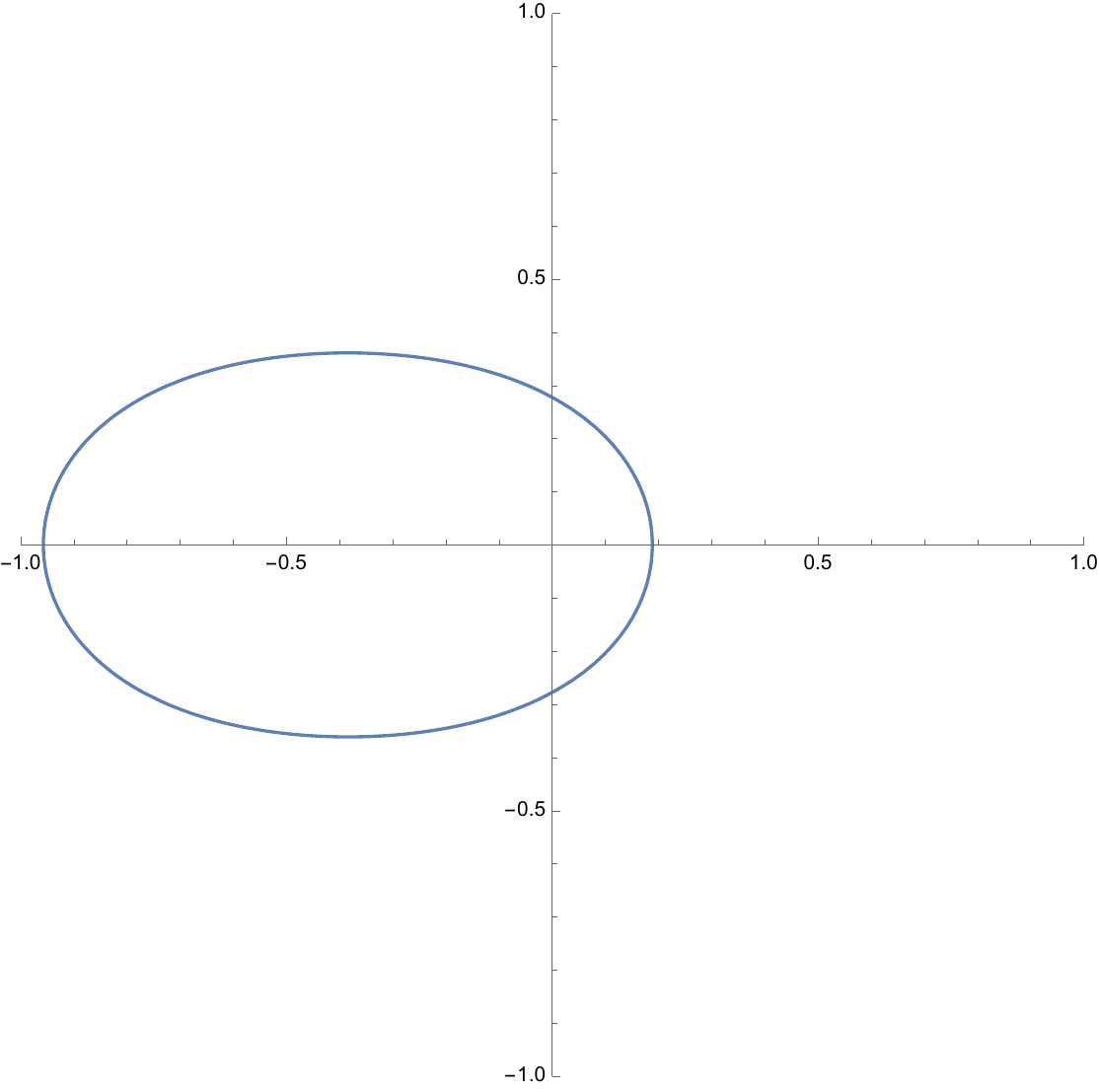} 
    \end{minipage}
    \caption{Two non-disc planar domains with commuting Dirichlet-to-Neumann map and boundary Laplacian.}
\end{figure}

	\noindent Case 3: $p(z)=(z-a)(z-b)$ with $a\neq b$. After integrating, we deduce that up to a constant shift,
	\eq{
		\psi(z)=\frac{1}{a-b}\log\b{\frac{z-a}{z-b}},
	}
	again with $\abs{a}$, $\abs{b}>1$. Note the image of $\psi$ remains unchanged if we precompose it with an isometry of the disc. In particular we may precompose with a rotation to ensure $b\in \R$ without any loss of generality, and then set
	\eq{
		\mu (z)=\frac{z+1/b}{z/b+1},
	}
	an isometry of the disc. One then computes
	\eq{
		\psi(\mu(z))=\frac{1}{a-b}\log\b{\frac{(a-b)z+ab-1}{b^2-1}}
	}
	to the conclude that the image of $\psi$, after rescaling, rotation and translation, is equivalent to Case 2.

	In each case, $\psi$ is injective and hence is a biholomorphism onto $\Omega$. Pulling back the standard metric on $\Omega$ via $\psi$ yields the metric $(\psi')^2g_0$ on the disc, which corresponds exactly to $e^{2\phi}$ on the boundary by construction, thereby concluding the proof.	
\end{proof}

\section{Surfaces of Higher Connectivity}
\label{sec: higher connectivity}

To address the case when the boundary has multiple connected components, we begin with the following proposition.

\begin{prop}
\label{prop: boundary isospectrality}
	If $[\Delta_{\partial M},\Lambda]=0$, then each connected component of $\partial M$ is Laplace-isospectral.
\end{prop}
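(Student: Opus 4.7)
The plan is to exploit the commutativity of $\Delta_{\partial M}$ and $\Lambda$ to produce a simultaneous eigenbasis, and then to argue via unique continuation for harmonic functions that any joint eigenfunction must have nonzero restriction to every connected component of $\partial M$. This will force any eigenvalue of the Laplacian on one component to be an eigenvalue on every other component, after which a dimension count on each component finishes the job.

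First, I would use the fact that $\Delta_{\partial M}$ and $\Lambda$ are self-adjoint with discrete spectrum on $L^2(\partial M)$, so the commutation relation gives a common $L^2$-orthonormal eigenbasis $\{\phi_n\}$. Since $\Delta_{\partial M}$ decomposes as $\bigoplus_{i=1}^k \Delta_{\gamma_i}$ over the boundary components $\gamma_1,\ldots,\gamma_k$, each eigenspace $V_\lambda:=\ker(\Delta_{\partial M}-\lambda)$ splits orthogonally as $V_\lambda=\bigoplus_i V_\lambda^{(i)}$ with $V_\lambda^{(i)}:=\ker(\Delta_{\gamma_i}-\lambda)$, and $\Lambda$ preserves each $V_\lambda$.

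Next, I would take any simultaneous eigenfunction $\phi\in V_\lambda$ with $\Lambda\phi=\mu\phi$ and consider its harmonic extension $u$ to $M$. If the restriction $\phi|_{\gamma_j}$ vanishes for some index $j$, then $u|_{\gamma_j}=0$ and $\partial_n u|_{\gamma_j}=(\Lambda\phi)|_{\gamma_j}=\mu\,\phi|_{\gamma_j}=0$, so unique continuation for harmonic functions with vanishing Cauchy data on an open boundary piece forces $u\equiv 0$, hence $\phi=0$. Therefore, if $V_\lambda^{(j)}=0$ for some $j$, then every $\phi\in V_\lambda$ automatically has vanishing $\gamma_j$-component, which would force $V_\lambda=0$. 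Consequently, $\lambda\in\sigma(\Delta_{\gamma_i})$ for one index $i$ if and only if $\lambda\in\sigma(\Delta_{\gamma_i})$ for all $i$; i.e., the spectra $\sigma(\Delta_{\gamma_i})$ all coincide as sets.

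Finally, each $\gamma_i$ is a smooth closed one-dimensional Riemannian manifold, hence isometric to a round circle of some length $L_i$, with Laplace spectrum $\{(2\pi n/L_i)^2\}_{n\geq 0}$ (multiplicity $2$ for $n\geq 1$, multiplicity $1$ for $n=0$). The smallest nonzero eigenvalue of this list pins down $L_i$, so equality of the spectra as sets yields $L_1=\cdots=L_k$, and hence full isospectrality including multiplicities. The main obstacle is the unique continuation step: one must invoke that a harmonic function which vanishes together with its normal derivative along an entire boundary component of a smooth surface must vanish identically. This is a classical consequence of Aronszajn's unique continuation theorem applied after locally straightening the boundary, and it is justified here because the harmonic extension of a smooth eigenfunction of $\Delta_{\partial M}$ is smooth up to $\partial M$.
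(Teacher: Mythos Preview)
Your argument is correct, but it is organized differently from the paper's. Both proofs ultimately rest on the same analytic fact---unique continuation for a harmonic function with vanishing Cauchy data on a boundary component---but they deploy it in different ways. The paper never diagonalizes $\Lambda$: it takes a single $\lambda$-eigenfunction $\phi$ on $C_1$, extends it by zero to $\tilde\phi$ on $\partial M$, and observes that $(\Lambda\tilde\phi)|_{C_2}$ is a $\lambda$-eigenfunction of $\Delta_{C_2}$ by commutativity, nonzero by unique continuation; running this on a basis gives an injective linear map from the $\lambda$-eigenspace on $C_1$ into that on $C_2$, which yields the multiplicity inequality directly and hence equality by symmetry. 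Your route instead passes through a joint eigenbasis for $(\Delta_{\partial M},\Lambda)$ and uses unique continuation to show each joint eigenfunction is supported on every component, which gives set-equality of the spectra; you then recover multiplicities from the explicit circle spectrum. The trade-off is that the paper's transport argument handles multiplicities in any dimension $d\geq 2$ without invoking the one-dimensional boundary structure, whereas your final step is specific to surfaces; on the other hand, your joint-eigenbasis formulation is arguably more conceptual and makes the role of the commutation hypothesis very transparent.
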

\begin{proof}
	Decompose $\partial M$ into connected components as $\partial M=\sqcup C_i$ with $i\geq 2$. Say $\phi$ is a $\lambda$-eigenfunction of $\Delta_{C_1}$, define on $\partial M$ the function
	\eq{
		\tilde \phi=\begin{cases}
			\phi&\tn{on }C_1,\\
			0& \tn{on }C_i\tn{ for }i\geq 2,
		\end{cases}
	}
	and let $u\in C^\infty(M)$ be the harmonic extension of $\tilde \phi$. Now, let $\psi=(\Lambda \tilde \phi)|_{C_2}\in C^\infty(C_2)$ and note $\psi$ is nonzero since $u$ cannot vanish both to zeroth and first order simultaneously at the boundary.
	Observe then that
	\eq{
		\Delta_{\partial M}(\Lambda \tilde{\phi})=\Lambda (\Delta_{\partial M}\tilde \phi)=\lambda\cdot \Lambda\tilde \phi\quad\implies\quad \Delta_{C_2}\psi=\lambda \psi,
	}
	so $\lambda$ is an eigenvalue of $\Delta_{C_2}$ with eigenfunction $\psi$. Repeating this process with a basis of the eigenspace on $M_1$ yields that the multiplicity $\lambda$ as an eigenvalue of $C_2$ must be the same as that for $C_1$. This argument applies to every pair of boundary components, and the result follows.
\end{proof}

We note this reasoning applies in any dimensions $d\geq 2$, and as a simple corollary obtain a strengthening of Theorem 1.3 in \cite{GKLP22}:

\begin{proof}[Proof of Theorem \ref{thm: GKLP improvement}]
	It follows from the proof of Theorem 1.3 in \cite{GKLP22} that each connected component of the boundary of $M$ consists of a sphere. By Proposition \ref{prop: boundary isospectrality}, each of these spheres must be isospectral, hence of the same radius. It follows that they must coincide, so the boundary of $M$ is in fact connected and $M$ is therefore a ball.
\end{proof}

This proposition also allows us to address the doubly connected case. Suppose $(M,g)$ is doubly connected, and again apply the smooth Riemann mapping theorem to find $H>0$ and a conformal factor $\phi\in C^\infty(C_H)$ so that $(M,g)$ and $(C_H, g_\phi=e^{2\phi} g_0)$ are isometric, where $g_0$ is the standard flat Euclidian metric on $C_H$ the cylinder of radius $1$ and height $H$.

\begin{prop}
\label{prop: doubly connected commutator}
	If the commutator $[\Delta_{(C_H, g_\phi)},\Lambda_\phi]$ vanishes, then $c_{e^{-2\phi}}(k)=0$ for $\abs{k}\geq 1$, so $\phi$ is locally constant.
\end{prop}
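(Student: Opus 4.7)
The plan is to derive the conclusion from the single observation that $\Lambda_\phi$ must preserve the kernel of $\Delta_{(\partial C_H,g_\phi)}$, avoiding a full commutator expansion of the kind carried out in Proposition \ref{prop: commutator means vanishing coefs}. The key structural difference from the simply connected case is that $\partial C_H$ now has two components, so the kernel of the boundary Laplacian is $2$-dimensional rather than $1$-dimensional, and $\Lambda_\phi$ will turn out to act nondegenerately on it. This additional rigidity is enough on its own to pin down $\phi$ on the boundary.

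First, I would note that if $[\Delta_{(\partial C_H,g_\phi)},\Lambda_\phi]=0$ and $f\in\ker\Delta_{(\partial C_H,g_\phi)}$, then $\Delta_{(\partial C_H,g_\phi)}\Lambda_\phi f=\Lambda_\phi\Delta_{(\partial C_H,g_\phi)}f=0$, so $\Lambda_\phi$ preserves the kernel. Since $\partial C_H=C_1\sqcup C_2$ is a compact $1$-manifold, this kernel is exactly the space of locally constant functions on $\partial C_H$, spanned by the indicators $\mb{1}_{C_1}$ and $\mb{1}_{C_2}$.

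Next, I would compute $\Lambda_\phi\mb{1}_{C_1}$ explicitly by passing to the conformally equivalent flat cylinder. Harmonic extensions depend only on the conformal class, so using $\Lambda_\phi=e^{-\phi}\Lambda_0$ it suffices to compute $\Lambda_0\mb{1}_{C_1}$ with respect to $g_0$. Taking coordinates $(\theta,x)\in \mathbb{S}^1\times [0,H]$ on $C_H$, the harmonic extension of $\mb{1}_{C_1}$ is, by uniqueness and rotational symmetry, the linear function $u(\theta,x)=1-x/H$, whose outward normal derivatives are $H^{-1}$ on $C_1$ and $-H^{-1}$ on $C_2$. Therefore
\eq{
	\Lambda_\phi\mb{1}_{C_1}=H^{-1}\b{e^{-\phi|_{C_1}}\mb{1}_{C_1}-e^{-\phi|_{C_2}}\mb{1}_{C_2}}.
}

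Finally, requiring this function to itself be locally constant forces $e^{-\phi|_{C_i}}$ to be constant on each $C_i$, whence $\phi|_{C_i}$ is constant and $c_{e^{-2\phi}}(k)=0$ for all $\abs{k}\geq 1$. I do not anticipate a real obstacle: the argument is clean precisely because the kernel of $\Delta_{(\partial C_H,g_\phi)}$ is $2$-dimensional, whereas in the simply connected case the kernel consists only of constants and $\Lambda_\phi$ annihilates it, which is exactly why Proposition \ref{prop: commutator means vanishing coefs} had to work harder. The only minor points to verify are the conformal-change formula for $\Lambda$ and the explicit linear form of the harmonic extension, both of which are routine.
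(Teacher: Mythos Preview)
Your argument is correct and is genuinely different from the paper's. The paper mimics the simply connected computation: it tests the commutator against the Fourier modes $f_n(h,\theta)=\frac{e^{n(H-h)}-e^{-n(H-h)}}{e^{nH}-e^{-nH}}e^{in\theta}$ supported on one boundary circle, expands and integrates by parts exactly as in Proposition~\ref{prop: commutator means vanishing coefs}, and then checks that the resulting algebraic prefactor (now involving $a(n)=-n\coth(nH)$ in place of $\abs{n}$) is nonzero for every $k\neq 0$. Your route bypasses all of this by exploiting the single zero eigenvalue: since $\Lambda_\phi$ must preserve $\ker\Delta_{(\partial C_H,g_\phi)}=\operatorname{span}\{\mb 1_{C_1},\mb 1_{C_2}\}$, and the harmonic extension of $\mb 1_{C_1}$ on the flat cylinder is the explicit linear function $1-x/H$, the requirement that $\Lambda_\phi\mb 1_{C_1}=H^{-1}(e^{-\phi}\mb 1_{C_1}-e^{-\phi}\mb 1_{C_2})$ be locally constant immediately forces $\phi$ to be locally constant on $\partial C_H$. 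This is shorter and more conceptual; the paper's computation has the virtue of being uniform with the simply connected case (where, as you observe, your trick degenerates because $\Lambda_\phi$ kills the one-dimensional kernel). One small wording point: $\Lambda_\phi$ does not act \emph{nondegenerately} on the kernel---it annihilates the global constant $\mb 1_{C_1}+\mb 1_{C_2}$---but your actual argument never uses this, only that $\Lambda_\phi\mb 1_{C_1}$ is nonzero on each component.
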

\begin{proof}
	This proof proceeds in similar fashion to that of Proposition \ref{prop: commutator means vanishing coefs}. With natural coordinates $(h,\theta)$ on $C_H$, note that the functions
	\eq{
		f_n(h,\theta)=\frac{e^{n(H-h)}-e^{-n(H-h)}}{e^{nH}-e^{-nH}}e^{in\theta},
	}
	defined for $n\neq 0$, satisfy
	\eq{
		\Delta_0f_n=0,\quad f_n(0,\theta)=e^{in\theta}\quad\tn{and}\quad f_n(H,\theta)=0.
	}
	If the commutator vanishes, we have
	\eq{
		\int_{\partial C_H}e^\phi[e^{-2\phi}(\partial_\theta^2-\phi'\partial_\theta),e^{-\phi}\Lambda_0]f_n\cdot f_{n-k}=0
	}
	for every $n$ and $k$. This integral receives no contribution from the boundary at $\{h=H\}$ since both $f_n$ and $f_{n-k}$ vanish there, and we thus obtain after expanding as before that necessarily
	\eq{
	\label{eq: double conn int}
		0=\b{a(n)(-k^2/2+3kn/2-n^2)-a(k-n)(-n^2+nk/2)}\int_{h=0} e^{ik\theta}e^{-2\phi}\d \theta,
	}
	where
	\eq{
		a(n)=-n\cdot \frac{e^{nH}+e^{-nH}}{e^{nH}-e^{-nH}}.
	}
	The coefficient in Equation (\ref{eq: double conn int}) vanishes for every $n$ only when $k=0$, and the result follows. 
\end{proof}

\begin{proof}[Proof of Theorem \ref{thm: main theorem} in the doubly connected case]
	By Propositions \ref{prop: boundary isospectrality} and \ref{prop: doubly connected commutator}, we see that a doubly connected surface with commuting Dirichlet-to-Neumann map and boundary Laplacian must be $\sigma$-isometric to the cylinder, since the conformal factor must be locally constant at the boundary, and each boundary component must have the same length. Conversely, a quick computation shows that the flat cylinder has this property, concluding the proof.
\end{proof}

It remains to show that the commutativity property can never hold on a surface whose boundary has $k\geq 3$ connected components.

\begin{proof}[Proof of Theorem \ref{thm: main theorem} in the case of $k\geq 3$ boundary components]
	We proceed by contradiction: take $M$ a manifold of genus $g$ as in the statement of the theorem whose boundary has $k\geq 3$ connected components, and suppose $[\Delta_{\partial M}, \Lambda]=0$. Decompose $\partial M$ into connected components as $\partial M=\sqcup C_i$, and note by Proposition \ref{prop: boundary isospectrality} that each is a circle $C_i$ of the same length, which we take to be $2\pi$ without loss of generality. Parametrizing each boundary component by arc length $\theta$, define for $m\in \N$ the function
	\eq{
		f_m=\begin{cases}
			\sin m\theta &\tn{on }C_1,\\
			0 &\tn{otherwise}.
		\end{cases}
	}
	We study the nodal set of the harmonic extension $u_m$ of $f_m$ to $M$. To this end, recall that $M$ naturally inherits a complex structure from the metric, and that a real valued harmonic function may then be realized as the real part of a holomorphic function. Using the standard normal form for holomorphic functions on Riemann surfaces (see Proposition 4.1 in \cite{M95}), we deduce the following properties of the nodal set:
	\begin{itemize}
		\item the nodal set is the union of smooth arcs with endpoints on the boundary,
		\item these arcs may intersect on the interior, but can never merge,
		\item the nodal set branches out of each boundary component exactly $2m$ times, as can be seen by applying the fact that $[\Delta_{\partial M}, \Lambda]=0$ to $f_m$.
	\end{itemize}	
	\begin{figure}[h!]
	\label{fig: graphs}
    	\centering
    	\begin{minipage}{0.85\textwidth}
        	\centering
        	\includegraphics[width=0.9\textwidth]{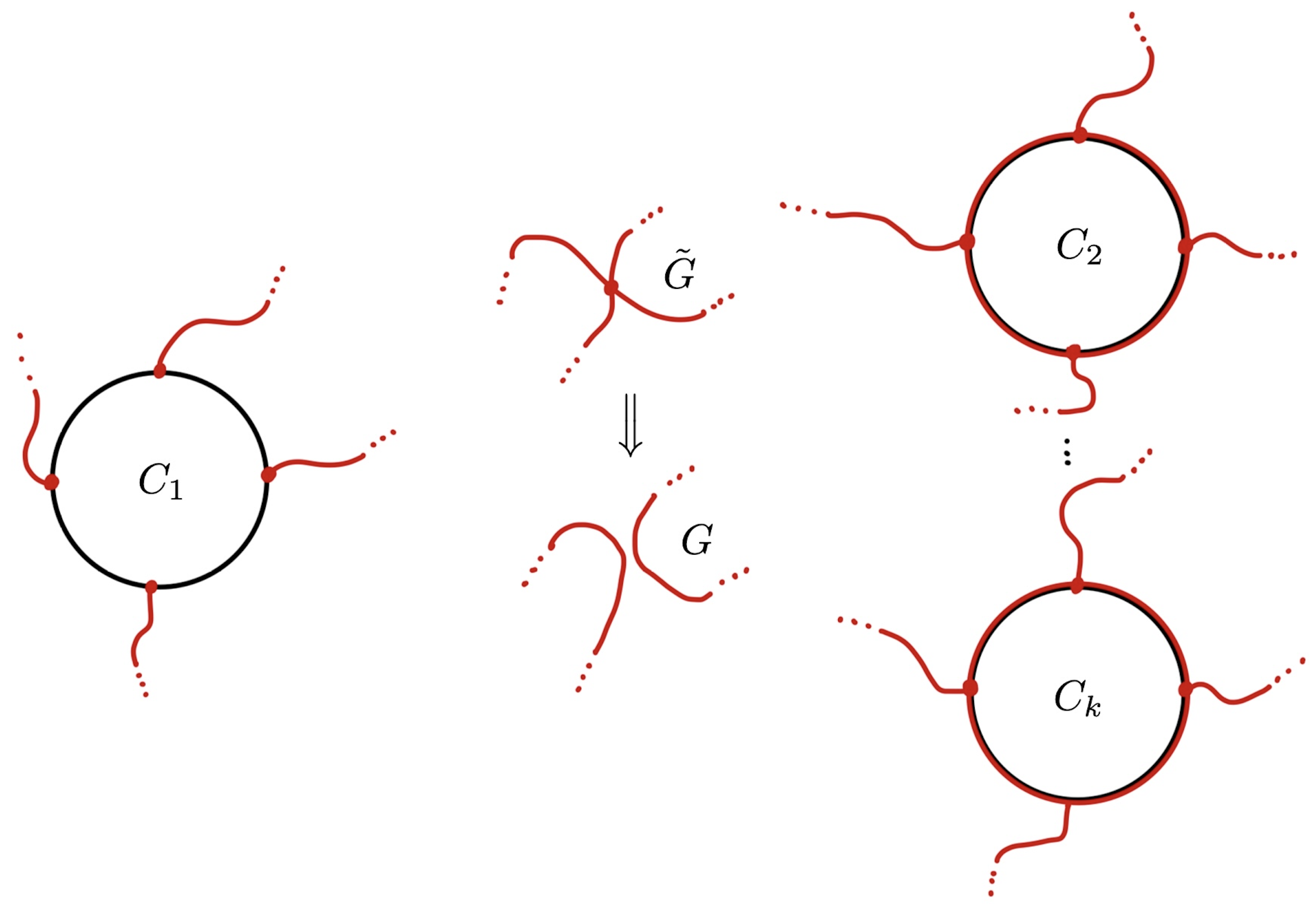}
    	\end{minipage}
   		\caption{Constructing a graph from the nodal set of $u_2$.}
	\end{figure}
	We may therefore associate to the nodal set a graph $\tilde{G}$, whose vertices lie at the intersection points of the nodal set and edges follow arcs of the nodal set. Define then the graph $G$ by removing the vertices of $\tilde{G}$ which lie on the interior of $M$, and successively pairing adjacent edges, as in Figure 2. Now, by construction, $G$ may be embedded on a surface of genus $g$, so with $V$, $E$ and $F$ denoting the number of vertices, edges and faces of this embedding, respectively,
	\eq{
		2-2g\leq V-E+F=2m-mk+F\implies 2-2g+m(k-2)\leq F.
	}
	On the other hand, the number of faces cannot exceed $k$, since otherwise there necessarily exists a region on $M$ bounded entirely by the nodal set, which cannot happen by the maximum principle. We obtain
	\eq{
		2-2g+m(k-2)\leq k,
	}
	and this must hold for \textit{every} positive $m$, a clear contradiciton when $k\geq 3$. Conclude that we cannot possibly have $[\Delta_{\partial M}, \Lambda]=0$ on $M$, as desired.
\end{proof}

\bibliography{bib.bib}{}
\bibliographystyle{plain}

\end{document}